\newtheorem{theorem}{Theorem}[section]
\newtheorem{corollary}[theorem]{Corollary}
\newtheorem{proposition}[theorem]{Proposition}
\theoremstyle{definition}
\theoremstyle{remark}
\newtheorem{remark}{\bf Remark}[section]
\numberwithin{equation}{section}
\def\p{\partial}
\def\wg{\widetilde g}
\begin{document}

\title[Liouville type theorems for the $p$-harmonic functions]{
Liouville type theorems for the $p$-harmonic functions on certain manifolds}

\author{Jingyi Chen and Yue Wang}
\address{ Department of Mathematics\\ The University of British Columbia, Vancouver, BC V6T1Z2, Canada}
\email{jychen@math.ubc.ca}
\address{Department of Mathematics\\China Jiliang University, Hangzhou, Zhejiang, China}
\email{kellywong@cjlu.edu.cn}

\begin{abstract}
We show that the Dirichlet problem at infinity is unsolvable for the $p$-Laplace equation for any nonconstant  continuous boundary data, for certain range of $p>n$, on an $n$-dimensional Cartan-Hadamard  manifold constructed from a  complete noncompact shrinking gradient Ricci soliton. Using the steady gradient Ricci soliton, we find an incomplete Riemannian metric on  ${\mathbb R}^2$ with positive Gauss curvature such that every positive $p$-harmonic function must be constant
for $p\geq 4$.
\end{abstract}

\date{October 21, 2014}
\thanks{{ 2010 AMS Mathematics Subject Classification.} 53C21, 58J05}
\thanks{The first author is partially supported by NSERC (RGPIN 203199-1). The second author is supported in part by the National Natural Science Foundation of China (No.10901147), and Zhejiang Provincial Natural Science Foundation of China (No.LY12A01028) and she is grateful for PIMS/UBC for hosting her visit where part of the work was carried out.}

\maketitle

\section{Introduction}

In this article, we study two questions about  the $p$-Laplace equation on Riemannian manifolds. The first one is  the solvability of the Dirichlet problem at infinity on a negatively curved complete noncompact manifold, and the second one is the Liouville property for positive solutions on ${\mathbb R}^2$ equipped with an incomplete metric with positive Gauss curvature. In both cases, the $n$-dimensional manifold $M$ under consideration is equipped with a Riemannian metric $e^{\frac{2f}{p-n}}g$ where $(M,g,f)$ is a complete gradient Ricci soliton which is shrinking for the first case and steady for the second case.

On a Riemannian manifold, for a constant $p>1$, a function $v$ in $W^{1,p}_{loc}\cap L^\infty_{loc}$  is $p$-harmonic if it is a weak solution to the $p$-Laplacian  equation
\begin{equation}\label{p-harmonic}
\text{div}\,(|\nabla v|^{p-2}\nabla v)=0.
\end{equation}
It is known that $p$-harmonic functions are in $C^{1,\alpha}$ (\cite{Tol} and the reference therein).

The behaviour of harmonic, more generally $p$-harmonic, functions depends on the sign of the curvature of the manifold in an essential way. We will discuss negatively curved and non-negatively curved manifolds separately.

A Cartan-Hadamard manifold is a complete simply connected Riemannian manifold with nonpositive sectional curvature everywhere. It is well-known that a Cartan-Hadamard manifold $M$ can be compactified by attaching a sphere $M(\infty)$ at the infinity. In the cone topology, the compactification is homeomorphic to a closed Euclidean $n$-ball \cite{E-O}. The Dirichlet problem at infinity for $p$-harmonic functions is to solve  the $p$-Laplace equation:
\begin{equation}\label{p-harmonic}
\text{div}\,(|\nabla v|^{p-2}\nabla v)=0
\end{equation}
on $M$ such that $v$ agrees with a given continuous function $\varphi$ on $M(\infty)$. For $p=2$, the Dirichlet problem at infinity for harmonic functions is solvable if there are suitable lower and upper bounds for the sectional curvature (Andersen \cite{Andersen}, Andersen-Schoen \cite{And-Sch}, Choi \cite{Choi},  Hsu \cite{Hsu}, Sullivan \cite{Sul}).
  Ancona \cite{An} constructed an example showing the Dirichlet problem is unsolvable if only a negative constant upper bound is imposed. For $p\in(1,\infty)$,  the Dirichlet problem at infinity is solvable under similar curvature assumptions like those in the case $p=2$ , in particular, it is solvable if the sectional curvature is bounded by
\begin{equation}\label{curvature}
  -r^{2\alpha-4-\epsilon}\leq K\leq -\frac{\alpha(\alpha-1)}{r^2}
\end{equation}
  near $M(\infty)$ where $\epsilon>0,\alpha>1$ where $r$ is the distance to a fixed point, for $p\in(1,1+(n-1)\alpha)$ (Holopainen \cite{Holo}, Holopainen-V\"ah\"akangas \cite{HV}, Pansu \cite{Pansu}).

Our first result is to show the unsolvability of the Dirichlet problem at infinity on certain Cartan-Hadamard manifolds constructed from shrinking gradient Ricci solitons, for certain range of $p>n$, which include the shrinking Gaussian soliton $({\mathbb R}^n, dx^2, {\frac{|x|^2}{4}})$ for every $p>n$. It is interesting to observe that the sectional curvature of the complete negatively curved metric $e^{\frac{|x|^2}{2(p-n)}}dx^2$ is not bounded above by $-\frac{\alpha(\alpha-1)}{r^2}$, for any constant $\alpha>1$,  at certain sections for sufficiently large $r$ (see Remark \ref{sharp}).
This indicates the upper bound in \eqref{curvature} is sharp in some sense for the solvability of the Dirichlet problem at infinity.

\begin{theorem}\label{Dirichlet_Infinity}
Suppose $(M,g,f)$ is a simply connected $n$-dimensional complete noncompact shrinking gradient Ricci soliton whose the sectional curvatures is bounded above by a constant $K_0$ with $0<K_0<\frac{1}{2(n-1)}$. Then the Dirichlet problem at infinity for the $p$-Laplace equation on $(M,e^{\frac{2f}{p-n}}g)$ is unsolvable for any nonconstant  continuous boundary value $\varphi$ and $n<p<\frac{1}{K_0}+2-n$.
\end{theorem}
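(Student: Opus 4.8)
The plan is to reduce the Dirichlet problem for the $p$-Laplacian on $(M,\wg)$, where $\wg=e^{\frac{2f}{p-n}}g$, to a Liouville statement for a weighted $p$-Laplace equation on the underlying soliton $(M,g)$, and to combine this with the fact that $(M,\wg)$ is a Cartan-Hadamard manifold so that the problem at infinity is well posed. The first step is a conformal change of variables. Writing $\wg=e^{2\phi}g$ with $\phi=\frac{f}{p-n}$, the relations $|\nabla_{\wg}v|_{\wg}=e^{-\phi}|\nabla_g v|_g$ and $dV_{\wg}=e^{n\phi}dV_g$ give
\begin{equation*}
\text{div}_{\wg}\big(|\nabla_{\wg}v|_{\wg}^{p-2}\nabla_{\wg}v\big)=e^{-n\phi}\,\text{div}_g\big(e^{(n-p)\phi}|\nabla_g v|_g^{p-2}\nabla_g v\big).
\end{equation*}
Since $(n-p)\phi=-f$, the function $v$ is $p$-harmonic for $\wg$ if and only if it is a weak solution of the $f$-weighted equation $\text{div}_g(e^{-f}|\nabla_g v|_g^{p-2}\nabla_g v)=0$ on $(M,g)$.

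Next I would verify that $(M,\wg)$ is a Cartan-Hadamard manifold, which is exactly what makes the compactification $M(\infty)$ and the Dirichlet problem at infinity meaningful. Simple connectivity is inherited from $(M,g)$, and completeness follows from the quadratic lower bound $f\ge\frac14(r-c)^2$ (Cao--Zhou), so that $e^{2\phi}\ge 1$ outside a compact set and $\wg$-lengths of rays to infinity diverge. The essential point is nonpositivity of the sectional curvature. For $g$-orthonormal $X,Y$ the conformal formula reads
\begin{equation*}
K_{\wg}(X,Y)=e^{-2\phi}\Big(K(X,Y)-\nabla^2\phi(X,X)-\nabla^2\phi(Y,Y)+(X\phi)^2+(Y\phi)^2-|\nabla\phi|^2\Big).
\end{equation*}
The soliton equation (normalized so that $\text{Ric}+\nabla^2 f=\tfrac12 g$) gives $\nabla^2\phi(X,X)+\nabla^2\phi(Y,Y)=\frac{1}{p-n}\big(1-\text{Ric}(X,X)-\text{Ric}(Y,Y)\big)$, while $(X\phi)^2+(Y\phi)^2-|\nabla\phi|^2\le 0$ since $X,Y$ extend to a $g$-orthonormal frame. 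Using $K(X,Y)\le K_0$ and $\text{Ric}(X,X),\text{Ric}(Y,Y)\le(n-1)K_0$, the bracket is bounded above by $K_0+\frac{2(n-1)K_0}{p-n}-\frac{1}{p-n}$, which is $\le 0$ precisely when $K_0(p+n-2)\le 1$, i.e. $p\le\frac1{K_0}+2-n$; this reproduces the stated range, and nonemptiness of the interval forces $K_0<\frac1{2(n-1)}$.

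Finally I would run the Liouville argument. Suppose for contradiction that the Dirichlet problem at infinity has a solution $v$ with nonconstant continuous boundary value $\varphi$. Then $v$ is continuous on the compact space $M\cup M(\infty)$, hence bounded, and by the first step it is a bounded weak solution of $\text{div}_g(e^{-f}|\nabla_g v|^{p-2}\nabla_g v)=0$. Testing this equation against $v\eta_R^p$, where $\eta_R=1$ on $B_R$, is supported in $B_{2R}$ and satisfies $|\nabla\eta_R|\le C/R$, and applying Hölder's inequality yields
\begin{equation*}
\int_M e^{-f}\eta_R^p|\nabla_g v|^p\,dV_g\le (p\|v\|_\infty)^p\int_M e^{-f}|\nabla\eta_R|^p\,dV_g\le \Big(\tfrac{Cp\|v\|_\infty}{R}\Big)^p\int_{B_{2R}\setminus B_R}e^{-f}\,dV_g.
\end{equation*}
A complete noncompact shrinking gradient Ricci soliton has finite weighted volume $\int_M e^{-f}\,dV_g<\infty$ (Cao--Zhou), so the right-hand side tends to $0$ as $R\to\infty$. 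Hence $\nabla_g v\equiv 0$ and $v$ is constant, contradicting that $\varphi$ is nonconstant, which proves unsolvability.

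I expect the main obstacle to be the curvature step: one must confirm that the conformal factor dictated by the soliton potential turns the possibly positively curved soliton into a nonpositively curved space, and that the algebra of the sectional curvature upper bound reproduces exactly the interval $n<p<\frac1{K_0}+2-n$, with the correct sign in the conformal curvature formula. By comparison the Liouville step is soft, its only nontrivial input being the finiteness of the weighted volume, which is guaranteed by the quadratic growth of $f$ on shrinking solitons.
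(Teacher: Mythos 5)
Your proposal is correct and follows essentially the same route as the paper: conformal reduction to the $f$-weighted $p$-Laplace equation, verification that $(M,e^{\frac{2f}{p-n}}g)$ is Cartan--Hadamard via the same conformal curvature computation combined with the soliton equation, and a Caccioppoli-type Liouville argument driven by the finiteness of $\int_M e^{-f}\,d\mu_g$ coming from the Cao--Zhou estimates on $f$ and on volume growth. The only cosmetic difference is in the Liouville step: the paper substitutes $w=-(p-1)\log u$ for a positive solution and tests with $\phi^2$ (so no boundedness is required, and the statement holds for all positive $p$-harmonic functions), whereas you test directly against $v\eta_R^p$ and invoke $\|v\|_\infty<\infty$; both are valid here since a solution of the Dirichlet problem at infinity is automatically bounded.
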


The proof replies on a Liouville type property (Proposition \ref{p>1}) for positive solutions to the $p$-Laplace equation on $(M,e^{-\frac{2f}{n-p}}g)$ for every $p>1$, where Cao-Zhou's estimates on $f$ and on the volume growth for gradient shrinking Ricci solitons \cite{CZ} are crucial as they imply that $e^{-f}$ is integrable on $(M,g)$. The advantage for considering the range $p>n$ is that, under the conformal change of metric,  it yields a complete metric $\tilde g$ and it guarantees the  negativity of the curvature of $\tilde g$ under the curvature assumption $K\leq K_0$, while one does not have such flexibility for $p=2$.

\vspace{.3cm}

However, the integration argument in the proof of Proposition \ref{p>1} is no longer valid for steady gradient Ricci solitons due to different behaviour of $f$ (typically $f$ tends to $-\infty$ along a sequence of points $x_k$ that go to infinity \cite{MS}, \cite{Wu}). Alternatively, a powerful way to prove Liouville type theorems for positive harmonic functions on complete manifolds with non-negative Ricci curvature is via Yau's gradient estimate \cite{Yau}. The $p$-harmonic version of Yau's estimate is established by Wang-Zhang \cite{WZ} (see \cite{SW} for a sharp form of the estimate). For a positive $p$-harmonic function $u$ in the conformally changed metric $\tilde g=e^{-\frac{2f}{n-p}}g$, we will first derive a maximum principle for $|\nabla \log u|$ for steady (or shrinking) gradient Ricci solitons, via a Bochner type formula. However, the required assumption on Ricci curvature for the gradient estimates cannot hold globally for steady gradient Ricci solitons if $\dim M >2$ because it would imply the scalar curvature of $g$ possesses a positive constant lower bound but this is impossible as shown in \cite{MS} and \cite{Wu}. In dimension 2, we can combine the maximum principle (Proposition \ref{max}) and the gradient estimate to prove a Liouville type result on the 2-plane with a positively curved {\it incomplete} metric.

\begin{theorem}\label{cigar}
Let $({\mathbb R}^2, g,f)$ be Hamilton's cigar soliton. Then there does not exist any nonconstant  positive $p$-harmonic function on $({\mathbb R}^2, \tilde g)$ for $p\geq 4$.
\end{theorem}

Harmonic functions on the complete gradient Ricci solitons have been studied by Munteanu-Sesum \cite{MS} and  Munteanu-Wang \cite{MW} with applications to the geometry and topology of the solitons; Moser \cite{Moser}
observed an interesting connection between the inverse mean curvature flow formulated as level sets in ${\mathbb R}^n$ and 1-harmonic functions; Kotschwar-Ni \cite{KN} generalize this to Riemannian ambient manifolds.

\vspace{.2cm}

The first author would like to thank Jiaping Wang for valuable discussions and his interest in this work.

\section{The Dirichlet problem at infinity}

In this section, the triple $(M,g,f)$ is assumed to be a complete noncompact shrinking gradient Ricci soliton. We first establish the following Liouville property for positive $p$-harmonic functions for $p>1$ without additional curvature assumption.

An $n$-dimensional Riemannian manifold $(M,g)$ is gradient Ricci soliton if
\begin{equation}\label{Ricci soliton}
Ric +\nabla\nabla f +\varepsilon g =0
\end{equation}
for some smooth function $f$ and $\varepsilon = -\frac{1}{2}, 0, \frac{1}{2}$. Corresponding to the three values of $\varepsilon$, the gradient Ricci soliton $(M,g,f)$ is shrinking, steady, or expanding ({\cite{CLN}, \cite{Ha}).

\begin{proposition}\label{p>1}
Let $(M,g,f)$ be a complete noncompact gradient shrinking Ricci soliton. Then there is no nonconstant  positive $p$-harmonic function on $(M, e^{-\frac{2f}{n-p}}g)$ for $p>1$.
\end{proposition}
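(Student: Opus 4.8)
The plan is to exploit the conformal covariance of the $p$-Laplace operator to turn the equation on $(M,\tilde g)$ into a \emph{weighted} $p$-Laplace equation on the underlying soliton $(M,g)$, and then to close the argument by a weighted Caccioppoli estimate together with the finiteness of the weighted volume. Writing $\tilde g=e^{2\phi}g$ with $\phi=\frac{f}{p-n}$ (so $p\neq n$, as is implicit in the definition of $\tilde g$), I would first record the transformation rules $|\nabla_{\tilde g}v|_{\tilde g}=e^{-\phi}|\nabla_g v|_g$, $\langle\nabla_{\tilde g}v,\nabla_{\tilde g}\psi\rangle_{\tilde g}=e^{-2\phi}\langle\nabla_g v,\nabla_g\psi\rangle_g$ and $d\mathrm{vol}_{\tilde g}=e^{n\phi}\,d\mathrm{vol}_g$. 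Substituting these into the weak formulation produces an overall factor $e^{(-(p-2)-2+n)\phi}=e^{(n-p)\phi}=e^{-f}$, so that $v$ is $p$-harmonic on $(M,\tilde g)$ if and only if it is a weak solution of
\[
\mathrm{div}_g\bigl(e^{-f}\,|\nabla_g v|^{p-2}\nabla_g v\bigr)=0 \qquad\text{on }(M,g).
\]
Note the weight $e^{-f}$ is independent of $p$, which is exactly why a single argument covers all $p>1$ with $p\neq n$.

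Next I would run a logarithmic Caccioppoli argument for this weighted equation. Since $v$ is $C^{1,\alpha}$ and strictly positive, on the support of any compactly supported cutoff $\eta$ it is bounded between positive constants, so $\psi=\eta^p v^{1-p}$ is an admissible test function. Setting $u=\log v$, and using $v^{-p}|\nabla v|^p=|\nabla u|^p$ and $v^{1-p}|\nabla v|^{p-2}\nabla v=|\nabla u|^{p-2}\nabla u$, the weak equation reduces to
\[
(p-1)\int_M e^{-f}\eta^p|\nabla u|^p\,d\mathrm{vol}_g = p\int_M e^{-f}\eta^{p-1}|\nabla u|^{p-2}\langle\nabla u,\nabla\eta\rangle\,d\mathrm{vol}_g .
\]
Applying Cauchy--Schwarz and then Young's inequality with exponents $\frac{p}{p-1}$ and $p$ on the right-hand side, and absorbing the resulting copy of $\int_M e^{-f}\eta^p|\nabla u|^p$ into the left, yields a clean Caccioppoli inequality
\[
\int_M e^{-f}\eta^p|\nabla u|^p\,d\mathrm{vol}_g \le C(p)\int_M e^{-f}|\nabla\eta|^p\,d\mathrm{vol}_g ,
\]
where the absorption constant $C(p)$ is finite for each fixed $p>1$.

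Finally I would choose $\eta\equiv 1$ on $B_R$, $\eta\equiv 0$ outside $B_{2R}$ with $|\nabla\eta|\le C/R$, and let $R\to\infty$. This is where the soliton structure is essential: by Cao--Zhou \cite{CZ} the potential $f$ grows quadratically and the $g$-volume of balls grows at most polynomially, so $\int_M e^{-f}\,d\mathrm{vol}_g<\infty$. The right-hand side is then bounded by $C\,R^{-p}\int_M e^{-f}\,d\mathrm{vol}_g\to 0$, forcing $\int_M e^{-f}|\nabla u|^p\,d\mathrm{vol}_g=0$; since $e^{-f}>0$, this gives $\nabla u\equiv 0$, so $u$ and hence $v$ is constant.

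The step I expect to be the crux is not the analysis but the input $e^{-f}\in L^1(M,d\mathrm{vol}_g)$: the cutoff limit collapses to zero only because the weighted volume is finite, and this fails in the steady case precisely because there $f$ tends to $-\infty$ at infinity. For shrinkers it holds thanks to the sharp quadratic lower bound on $f$ combined with the Euclidean volume bound of \cite{CZ}. The remaining points are routine and should only need care in verifying admissibility of $\psi=\eta^p v^{1-p}$ and the uniform sign/absorption in the Young's inequality step.
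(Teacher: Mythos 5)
Your proposal is correct and follows essentially the same route as the paper: the paper also reduces to a logarithmic Caccioppoli estimate (via the identity $\mathrm{div}_{\tilde g}(|\widetilde\nabla w|^{p-2}\widetilde\nabla w)=|\widetilde\nabla w|^p$ for $w=-(p-1)\log u$, which is your test-function computation in disguise), converts to the weight $e^{-f}$ on $(M,g)$, and kills the right-hand side using Cao--Zhou's quadratic lower bound on $f$ and polynomial volume growth. The only cosmetic differences are the cutoff power ($\phi^2$ versus $\eta^p$) and that the paper integrates in $\tilde g$ first before converting back to $g$.
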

\proof Since $u$ is a $p$-harmonic function on $(M, \widetilde g)$ where $\tilde g = e^{-\frac{2f}{n-p}}$,  it holds
\begin{equation}\label{p-tilde}
\mathrm{div}_{\widetilde g}\, \left( |\widetilde\nabla w|^{p-2}\widetilde \nabla w\right) = |\widetilde\nabla w|^p
\end{equation}
where $w=-(p-1)\log u$. For any smooth cut-off function $\phi\in C^\infty_0(M)$, in the complete metric $g$, we require
$$
 \begin{cases}
\phi = 1,   \hspace{2cm} \mbox{on $B_{x_0}(\rho, g)$} \\
\phi = 0, \hspace{2cm} \mbox{on $M\setminus B_{x_0}(2\rho,g)$}\\
0 \leq \phi \leq 1, \hspace{1.22cm}\mbox{on $M$} \\
 |\nabla \phi |^2 \leq  C/ \rho^2, \hspace{0.55cm} \mbox{on $M$.}
\end{cases}
$$
Here $B_{x_0}(r,g)$ stands for the geodesic ball centred at $x_0$ with radius $r$ in the metric $g$ in $M$. Multiplying $\phi^2$ to \eqref{p-tilde} then integrating and applying the Stokes' theorem, we have
\begin{eqnarray*}
\int_M |\widetilde \nabla w|^p_{\wg}\, \phi^2 \,d\mu_{\widetilde g}&=&-2\int_M  \phi|\widetilde\nabla w|^{p-2}_{\wg}\,\widetilde \nabla w\widetilde\nabla\phi \,d\mu_{\widetilde g} \\
&\leq&2\left(\int_M \phi^2|\widetilde\nabla w|^p_{\wg}\,d\mu_{\widetilde g} \right)^{\frac{p-1}{p}}\left(\int_M \phi^{2} |\widetilde\nabla\phi|^p_{\wg}\,d\mu_{\widetilde g} \right)^{\frac{1}{p}}
\end{eqnarray*}
by the Cauchy-Schwarz inequality ($p>1$). Therefore,  we have
$$
\int_M \phi^2|\widetilde \nabla w|^p_{\wg} \, d\mu_{\widetilde g} \leq 2^p \int_M \phi^2 |\widetilde\nabla \phi|^p_{\wg}\, d\mu_{\widetilde g}.
$$
Converting back to the metric $g$, we are led to
\begin{equation}\label{-f}
\int_M \phi^2 |\nabla w|^p e^{-f}\, d\mu_g \leq 2^p \int_M \phi^2 |\nabla \phi|^p e^{-f}\,d\mu_g.
\end{equation}

By Theorem 1.1 in \cite{CZ}, the potential function $f$ for a shrinking gradient Ricci soliton satisfies the pointwise estimate
\begin{equation}\label{f-estimate}
\frac{1}{4}(r(x)-c)^2\leq f(x) \leq \frac{1}{4}(r(x)+c)^2
\end{equation}
for $x \in M\setminus B_{x_0}(1,g)$, where $r(x)$ is the distance from $x$ to a fixed point $x_0$ in $M$ and $c$ is a positive constant.

Therefore, by \eqref{-f} and \eqref{f-estimate}
\begin{eqnarray*}
\int_{B(x_0,\rho)}|\nabla w|^p e^{-\frac{1}{4}(r+c)^2}\, d\mu_g
&\leq& \int_M  \phi^2 |\nabla w|^p e^{-f}\, d\mu_g\\
&\leq& \frac{2^p C e^{-\frac{1}{4}(\rho-c)^2}}{\rho^p}\int _{B_{x_0}(2\rho,g)\setminus B_{x_0}(\rho,g)} d\mu_g \\
&\leq& \frac{2^p C e^{-\frac{1}{4}(\rho-c)^2}}{\rho^p} \rho^n
\end{eqnarray*}
where the last inequality follows from the volume growth estimate (Theorem 1.2 in \cite{CZ}) on shrinking gradient Ricci solitons:
$$
\mathrm{Vol}\,(B_{x_0}(\rho,g))\leq C \rho^n
$$
for sufficiently large $\rho$ and $C$ stands for some uniform constant.
Now letting $\rho\to\infty$, we conclude $|\nabla w|\equiv 0$ on $M$, so $u$ is a constant. \endproof

Next, we show that $(M,\tilde g)$ can be turned into a negatively curved manifold under suitable assumptions on $p$ and the sectional curvature $K$ of $(M,g)$:

\begin{proposition}\label{K}
Let $(M,g,f)$ be a simply connected $n$-dimensional complete noncompact shrinking gradient Ricci soliton whose the sectional curvature is bounded above by a constant $K_0$ with $0<K_0<\frac{1}{2(n-1)}$. Then $(M, e^{-\frac{2f}{n-p}}g)$ is a Cartan-Hadamard manifold for
$n<p\leq \frac{1}{K_0}+2-n$.
\end{proposition}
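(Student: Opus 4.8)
The plan is to compute the sectional curvature of $\tilde g=e^{2\psi}g$ with $\psi=\frac{f}{p-n}$ (note that $e^{2\psi}=e^{-\frac{2f}{n-p}}$ since $p>n$), show it is nonpositive, and then verify completeness; simple connectivity is automatic, as a conformal change does not alter the underlying smooth manifold, which is simply connected by hypothesis. First I would invoke the conformal transformation law for sectional curvature: for a $g$-orthonormal pair $\{X,Y\}$ extended to an orthonormal frame $\{X,Y,e_3,\dots,e_n\}$,
\[
e^{2\psi}\widetilde K(X,Y)=K(X,Y)-\mathrm{Hess}\,\psi(X,X)-\mathrm{Hess}\,\psi(Y,Y)+(d\psi(X))^2+(d\psi(Y))^2-|\nabla\psi|_g^2 .
\]
The three gradient terms combine into $-\sum_{i\ge 3}(d\psi(e_i))^2\le 0$, so they only help the desired inequality.

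Next I would use the shrinking soliton equation \eqref{Ricci soliton} with $\varepsilon=-\frac12$, that is $\mathrm{Hess}\,f=\frac12 g-\mathrm{Ric}$, to obtain $\mathrm{Hess}\,\psi=\frac{1}{p-n}\bigl(\frac12 g-\mathrm{Ric}\bigr)$ and hence $\mathrm{Hess}\,\psi(X,X)+\mathrm{Hess}\,\psi(Y,Y)=\frac{1}{p-n}\bigl(1-\mathrm{Ric}(X,X)-\mathrm{Ric}(Y,Y)\bigr)$. It therefore suffices to prove $(p-n)K(X,Y)+\mathrm{Ric}(X,X)+\mathrm{Ric}(Y,Y)\le 1$. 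Expanding $\mathrm{Ric}(X,X)+\mathrm{Ric}(Y,Y)=2K(X,Y)+\sum_{i\ge 3}[K(X,e_i)+K(Y,e_i)]$ and applying $K\le K_0$ to $K(X,Y)$ (legitimate since the coefficient $p-n+2$ is positive) and to each of the $2(n-2)$ remaining sectional curvatures bounds the left-hand side by $(p-n+2)K_0+2(n-2)K_0=(p+n-2)K_0$. The hypothesis $p\le\frac{1}{K_0}+2-n$ reads exactly $(p+n-2)K_0\le 1$, which closes the estimate and gives $\widetilde K\le 0$ everywhere.

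Finally I would establish completeness of $\tilde g$. By the Cao–Zhou estimate \eqref{f-estimate}, $f\ge\frac14(r-c)^2$ outside $B_{x_0}(1,g)$ while $f$ is bounded below on the compact ball $\overline{B_{x_0}(1,g)}$, so $f$ is bounded below on all of $M$; since $p>n$, the factor $\psi=\frac{f}{p-n}$ is bounded below and $e^{2\psi}\ge c_0>0$, whence $\tilde g\ge c_0\,g$. As $(M,g)$ is complete, any divergent curve has infinite $g$-length and therefore infinite $\tilde g$-length, so $\tilde g$ is complete, and $(M,\tilde g)$ is Cartan–Hadamard.

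The step I expect to be most delicate is the sign bookkeeping in the conformal curvature formula combined with the Ricci-to-sectional conversion, since only an \emph{upper} bound on $K$ is available: the crucial point is that the coefficient $p-n+2$ of $K(X,Y)$ is positive (this is precisely where $p>n$ is used), so the bound $K\le K_0$ can be pushed through, and that the soliton equation pins $\mathrm{Hess}\,\psi$ down exactly in terms of $\mathrm{Ric}$. The threshold on $p$ then emerges as precisely the condition making $(p+n-2)K_0\le 1$.
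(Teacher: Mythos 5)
Your proof is correct and follows essentially the same route as the paper: the conformal transformation law for sectional curvature, substitution of the shrinking soliton equation to trade $\mathrm{Hess}\,f$ for Ricci terms, discarding the nonpositive gradient terms, converting Ricci to a sum of $2(n-1)$ sectional curvatures per factor, and bounding by $(p+n-2)K_0\le 1$; the completeness argument via the Cao--Zhou lower bound on $f$ is also the paper's. The only (harmless) difference is that you use merely that $f$ is bounded below, where the paper quotes the full quadratic growth.
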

\begin{proof}
When $p>n$, the metric $\tilde g= e^{-\frac{2f}{n-p}}g$ is complete since
$$
-\frac{2f(x)}{n-p}=\frac{2f(x)}{p-n}\geq \frac{(r-c)^2}{2(p-n)}
$$
by \cite{CZ} and that $g$ is complete.

We use the conventions in \cite{CLN} for curvatures. The Riemann curvature tensor is written as
\begin{eqnarray*}
R\left(\frac{\p}{\p x^i},\frac{\p}{\p x^j}\right)\frac{\p}{\p x^k} &=& R^l_{ijk}\frac{\p}{\p x^l}\\
R_{ijkl} &=& \left<  R\left(\frac{\p}{\p x^i},\frac{\p}{\p x^j}\right)\frac{\p}{\p x^k} , \frac{\p}{\p x^l} \right>
\end{eqnarray*}
and if $\frac{\p}{\p x^1},\cdots, \frac{\p}{\p x^n}$ is orthonormal at $x_0\in M$,  the sectional curvature of the plane $P_{ij}$ spanned by $\frac{\p}{\p x^i},\frac{\p}{\p x^j}$ at $x_0$ is
$$
K(P_{ij}) = R_{ijji}
$$
and the Ricci curvature at $x_0$ is
$$
R_{jk}=\sum^n_{i=1}R^i_{ijk}.
$$
Under the conformal change of metric $\tilde{g}= e^{\frac{2f}{p-n}}g$, the sectional curvature at $x_0$ changes as (cf. p.27 in \cite{CLN}):
\begin{eqnarray}\label{sectional}
\widetilde{K}(P_{ij})&=&\frac{\tilde{g}(\widetilde{R}^s_{ijj}\frac{\p}{\p x^s},\frac{\p}{\p x^i})}{\tilde{g}_{ii}\tilde{g}_{jj}-\tilde{g}_{ij}^2}  \\
&=&e^{\frac{4f}{n-p}}\widetilde{R}_{ijji} \nonumber \\
&=&e^{\frac{4f}{n-p}} \cdot e^{\frac{2f}{p-n}}\left(R_{ijji}-\frac{f_{ii}+f_{jj}}{p-n}-\frac{|\nabla f|^2-f^2_i-f^2_j}{(p-n)^2}   \right) \nonumber \\
&=&e^{\frac{2f}{n-p}}\left(K(P_{ij})-\frac{f_{ii}+f_{jj}}{p-n}-\frac{|\nabla f|^2-f^2_i-f^2_j}{(p-n)^2}  \nonumber  \right).
\end{eqnarray}
On the gradient shrinking Ricci soliton, we therefore have
$$
\tilde{K}(P_{ij})\leq e^{\frac{2f}{n-p}} \left( K(P_{ij}) +\frac{R_{ii}+R_{jj} -1}{p-n}  \right)
$$
by using the defining equation for shrinking gradient Ricci solitons and dropping the last term above that is nonpositive for $i\not=j$.

From the assumption on  $K_0$ and $p>n$, it follows
\begin{eqnarray*}
K(P_{ij}) +\frac{R_{ii}+R_{jj} -1}{p-n} &=& K(P_{ij})+ \frac{\sum_{s\not= i}K(P_{is})+\sum_{s\not= j}K(P_{sj})-1}{p-n}\\
&\leq& \left( 1+\frac{2(n-1)}{p-n}\right) K_0 - \frac{1}{p-n}\\
&\leq& \frac{1}{p-n}\left( \left(p+n-2\right)K_0-1\right).
\end{eqnarray*}
Therefore the sectional curvature $\widetilde K$ of $(M, e^{\frac{2f}{p-n}}g)$ is nonpositive since $p+n-2\leq \frac{1}{K_0}$. \end{proof}

\noindent{\textsl{Proof of Theorem \ref{Dirichlet_Infinity}}. Suppose there is a solution $u$ to the Dirichlet problem at infinity and $u=\varphi$ on $M(\infty)$ for some nonconstant function $\varphi\in C^0(M(\infty))$. Then $u$ is continuous on $M\cup M(\infty)$ hence it is bounded. Then $u-\inf_M u +1$ is a positive solution to the $p$-Laplace equation on $(M,\tilde g)$, therefore it must be constant from Proposition \ref{p>1}.
In turn, $u$ is constant on $M$ and $\varphi$ must be constant on $M(\infty)$. The contradiction concludes the proof.

\

When ${\mathbb R}^n$ is viewed as a shrinking gradient Ricci soliton with $f(x)= \frac{|x|^2}{4}$, we can take $K_0=0$ and we have
\begin{corollary}
The Dirichlet problem at infinity for the $p$-Laplace equation is unsolvable on $({\mathbb R}^n, e^{\frac{|x|^2}{2(p-n)}} dx^2)$ for every $p>n$.
\end{corollary}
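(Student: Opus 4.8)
The plan is to recognize this corollary as the limiting case $K_0=0$ of Theorem \ref{Dirichlet_Infinity}, and to run the same argument directly for the Gaussian soliton, where every curvature computation becomes explicit. First I would verify that $({\mathbb R}^n, dx^2, \frac{|x|^2}{4})$ is indeed a complete noncompact shrinking gradient Ricci soliton in the sense of \eqref{Ricci soliton}: the flat metric has $Ric=0$ and the Hessian of $\frac{|x|^2}{4}$ equals $\frac{1}{2}\delta_{ij}=\frac{1}{2}g_{ij}$, so $Ric+\nabla\nabla f-\frac{1}{2}g=0$, i.e. $\varepsilon=-\frac{1}{2}$. Substituting $f=\frac{|x|^2}{4}$ into the conformal factor gives exactly $e^{-\frac{2f}{n-p}}g=e^{\frac{|x|^2}{2(p-n)}}dx^2$, the metric named in the statement.

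Next I would establish that $({\mathbb R}^n,\tilde g)$ is a Cartan-Hadamard manifold for every $p>n$, mirroring Proposition \ref{K} but with $K_0=0$. Completeness is immediate here: since $f=\frac{|x|^2}{4}\geq 0$, the conformal factor satisfies $e^{\frac{|x|^2}{2(p-n)}}\geq 1$, hence $\tilde g\geq dx^2$ and every ray to infinity has infinite $\tilde g$-length. The space is simply connected because it is diffeomorphic to ${\mathbb R}^n$. For the curvature, I would feed $K(P_{ij})=0$ and $R_{ii}=R_{jj}=0$ into the formula \eqref{sectional} from Proposition \ref{K}, which collapses to $\tilde K(P_{ij})\leq e^{\frac{2f}{n-p}}\cdot\frac{-1}{p-n}<0$ for all $p>n$. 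Thus $K_0=0$ in fact produces strictly negative curvature, and the admissible range $n<p\leq\frac{1}{K_0}+2-n$ opens up to the whole interval $p>n$ as $K_0\to 0^+$. Consequently the compactification by a sphere $M(\infty)$ at infinity is available.

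Finally I would invoke Proposition \ref{p>1}: since the Gaussian soliton is a complete noncompact shrinking gradient Ricci soliton, there is no nonconstant positive $p$-harmonic function on $({\mathbb R}^n,\tilde g)$ for any $p>1$, in particular for $p>n$. Then the closing argument of Theorem \ref{Dirichlet_Infinity} transfers verbatim: if a solution $u$ to the Dirichlet problem at infinity existed with nonconstant boundary data $\varphi\in C^0(M(\infty))$, then $u$ would be continuous on $M\cup M(\infty)$, hence bounded, so $u-\inf_M u+1$ would be a positive $p$-harmonic function and therefore constant, forcing $\varphi$ to be constant, a contradiction.

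Because the corollary is a direct specialization, I do not anticipate a genuine obstacle; the only point requiring care is that $K_0=0$ lies on the boundary of the hypothesis $0<K_0<\frac{1}{2(n-1)}$ of Theorem \ref{Dirichlet_Infinity}. The remedy is simply to re-derive the Cartan-Hadamard property directly from the vanishing curvature of the flat background, which is cleaner than the general estimate and even yields strict negativity. Everything else, namely completeness via the explicit conformal factor, the Liouville property from Proposition \ref{p>1}, and the contradiction argument, carries over without modification.
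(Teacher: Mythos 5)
Your proposal is correct and follows essentially the same route as the paper, which simply observes that the Gaussian soliton lets one take $K_0=0$ in Theorem \ref{Dirichlet_Infinity} (so the range $n<p<\frac{1}{K_0}+2-n$ becomes all $p>n$) and gives no further detail. Your added care about $K_0=0$ lying outside the hypothesis $0<K_0<\frac{1}{2(n-1)}$ is well placed, and your fix --- recomputing $\widetilde K(P_{ij})\leq -\frac{1}{p-n}e^{\frac{2f}{n-p}}<0$ directly from \eqref{sectional}, checking completeness via $\tilde g\geq dx^2$, and then invoking Proposition \ref{p>1} --- is exactly how the gap should be closed (one could equivalently note that $K\leq 0$ implies $K\leq K_0$ for every small $K_0>0$ and let $K_0\to 0^+$).
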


\begin{remark}\label{sharp}
The sectional curvature of $\tilde g = e^{2\cdot \frac{|x|^2}{4(p-n)}}dx^2$ can be computed from \eqref{sectional}
\begin{eqnarray*}
\widetilde K(P_{ij})(x)&=& - \, e^{-\frac{|x|^2}{2(p-n)}}\left(\frac{1}{p-n}+\frac{|x|^2-(x^i)^2-(x^j)^2}{4(p-n)^2}\right)
\end{eqnarray*}
where $P_{ij}(x)$ is the plane spanned by $\{\frac{\p}{\p x^i},\frac{\p}{\p x^j}\}$ at $x\in{\mathbb R}^n$. The Riemannian distance from $x$ to the origin is
$$
r(x) = \int^{|x|}_0 e^{\frac{s^2}{4(p-n)}}\, ds.
$$
If we take $x=(0,...,0,x^i,0,...,0)$, then $|x|^2-(x^i)^2-(x^j)^2=0$ and
$$
\lim_{|x|\to\infty}-\widetilde K(P_{ij}(x))\, r^2(x)= \lim_{|x|\to\infty}\frac{ \left(\int^{|x|}_0 e^{\frac{s^2}{4(p-n)}}\, ds\right)^2}{(p-n)e^{\frac{|x|^2}{2(p-n)}}} =\frac{1}{p-n}\left(\lim_{|x|\to\infty}\frac{2(p-n)}{|x|}\right)^2=0
$$
by l'H\^opital's rule. This in particular shows that there does not exist constant $\alpha>1$ which makes true
$$
K(x)\leq -\,\frac{\alpha(\alpha-1)}{r^2(x)}
$$
for all sections at $x$ for large $r(x)$.
\end{remark}

\section{A Liouville theorem on ${\mathbb R}^2$ with an incomplete metric with positive curvature}

In this section, we consider the $p$-Laplace equation weighted by a smooth function $f$ on a manifold $(M,g)$, which is equivalent to
the $p$-Laplace equation on $(M,e^{-\frac{2f}{n-p}}g)$ and derive a Bochner formula for its solutions. Specialized to the shrinking or steady gradient Ricci solitons, the Bochner formula yields a maximum principle, and this is applied to Hamilton's cigar soliton.

\subsection{ A Bochner type formula for the weighted $p$-Laplace equation }

Let  $g$ be a Riemannian metric on an $n$-dimensional manifold $M$, $f$ is a smooth real
valued function on $M$. Consider the following equation
\begin{equation}\label{f-p-harmonic}
\text{div}\,(|\nabla u|^{p-2}\nabla u)-|\nabla u|^{p-2}\langle \nabla
f,\nabla u\rangle =0
\end{equation}
on $M$. Equation \eqref{f-p-harmonic} has a variational structure, in fact, it is the Euler-Langrange equation of the
following weighted $p$-energy functional
$$
E_{p,f}(u)=\int_{M}|\nabla u|^{p} e^{-f} d\mu_g.
$$
We call (\ref{f-p-harmonic}) the $f$-weighted $p$-Laplacian equation on $(M,g)$.

\begin{proposition}\label{equivalent}
 Under a conformal change
$\widetilde g=e^{-\frac{2f}{n-p}}g$, $u$ is a solution to $(\ref{f-p-harmonic})$ on $(M,g)$ if and only if
$u$ is a solution to the $p$-Laplace equation $(\ref{p-harmonic})$ on $(M,\widetilde g)$.
\end{proposition}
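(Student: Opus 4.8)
The plan is to exploit the variational structure and show that the weighted $p$-energy on $(M,g)$ and the ordinary $p$-energy on $(M,\widetilde g)$ are literally the same functional, so that they share the same critical points, which are precisely the weak solutions of the two equations. Write the conformal factor as $\widetilde g = e^{2\psi}g$ with $\psi = \frac{f}{p-n}$, so that $(n-p)\psi = -f$. The standard conformal rules give $|\widetilde\nabla u|_{\widetilde g} = e^{-\psi}|\nabla u|_g$ and $d\mu_{\widetilde g} = e^{n\psi}\,d\mu_g$. Hence
\begin{equation*}
|\widetilde\nabla u|^p_{\widetilde g}\,d\mu_{\widetilde g} = e^{-p\psi}|\nabla u|^p_g\,e^{n\psi}\,d\mu_g = e^{(n-p)\psi}|\nabla u|^p_g\,d\mu_g = e^{-f}|\nabla u|^p_g\,d\mu_g,
\end{equation*}
so that the $p$-energy $\int_M|\widetilde\nabla u|^p_{\widetilde g}\,d\mu_{\widetilde g}$ on $(M,\widetilde g)$ coincides with $E_{p,f}(u)$, over the same space of admissible test functions. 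Since a function is a weak solution of \eqref{p-harmonic} on $(M,\widetilde g)$ iff it is a critical point of the $p$-energy there, and a weak solution of \eqref{f-p-harmonic} on $(M,g)$ iff it is a critical point of $E_{p,f}$, the identity of the two functionals delivers the equivalence at once.

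Alternatively, for an explicit pointwise statement I would simply compute $\mathrm{div}_{\widetilde g}\bigl(|\widetilde\nabla u|^{p-2}_{\widetilde g}\widetilde\nabla u\bigr)$ by hand. Using $\widetilde\nabla^i u = e^{-2\psi}\nabla^i u$ together with the norm rule above, the flux field has components $X^i = |\widetilde\nabla u|^{p-2}_{\widetilde g}\widetilde\nabla^i u = e^{-p\psi}|\nabla u|^{p-2}_g\nabla^i u$. Feeding this into $\mathrm{div}_{\widetilde g}X = \frac{1}{\sqrt{\det\widetilde g}}\partial_i\bigl(\sqrt{\det\widetilde g}\,X^i\bigr)$ with $\sqrt{\det\widetilde g} = e^{n\psi}\sqrt{\det g}$, and again using $(n-p)\psi = -f$, the weight $e^{(n-p)\psi} = e^{-f}$ appears inside the derivative; expanding by the product rule and factoring out the nowhere-vanishing $e^{-(n\psi+f)}$ gives
\begin{equation*}
\mathrm{div}_{\widetilde g}\left(|\widetilde\nabla u|^{p-2}_{\widetilde g}\widetilde\nabla u\right) = e^{-(n\psi+f)}\left(\mathrm{div}\,(|\nabla u|^{p-2}\nabla u) - |\nabla u|^{p-2}\langle\nabla f,\nabla u\rangle\right).
\end{equation*}
Because the prefactor is strictly positive, the left-hand side vanishes exactly when the parenthesized expression — the left side of \eqref{f-p-harmonic} — vanishes, which is the claim.

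I do not expect a genuine obstacle: the content is bookkeeping of conformal weights, and the only delicate point is that both equations are meant in the weak $W^{1,p}_{loc}$ sense. The energy-identity argument handles this cleanly, since it never differentiates $u$ twice: the two energies agree on every admissible variation, so their first variations, and hence the weak Euler--Lagrange formulations, coincide verbatim. The divergence computation should then be read as the classical shadow of that identity, useful mainly for recording the explicit conformal factor $e^{-(n\psi+f)}$, which reappears in the Bochner computation of the next subsection.
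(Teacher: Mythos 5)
Your first argument is correct and is essentially the paper's proof: the paper verifies directly that the weak formulations coincide, i.e.\ that $\langle\widetilde\nabla\varphi,|\widetilde\nabla u|^{p-2}_{\widetilde g}\widetilde\nabla u\rangle_{\widetilde g}\,d\mu_{\widetilde g}=\langle\nabla\varphi,|\nabla u|^{p-2}_g\nabla u\rangle_g\,e^{-f}d\mu_g$ for every test function $\varphi$, which is exactly the first variation of your energy identity and rests on the same conformal bookkeeping $e^{(n-p)\psi}=e^{-f}$. Your supplementary pointwise divergence computation is also correct but, as you note, only the weak (test-function) version is needed.
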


\begin{proof} We write $\nabla$ for $\nabla_g$ and $\widetilde\nabla$ for $\nabla_{\widetilde g}$. For any $\varphi\in C^\infty_0(M)$,
\begin{eqnarray*}
\int_M \langle \widetilde{\nabla} \varphi, |\widetilde{\nabla} u|^{p-2}_{\widetilde g}\widetilde \nabla u \rangle_{\widetilde g}\,d\mu_{\widetilde g}
&=& \int_M |\widetilde\nabla u|^{p-2}_{\widetilde g}\langle \widetilde\nabla \varphi, \widetilde\nabla u \rangle_{\widetilde g}\,d\mu_{\widetilde g} \\
&=& \int_M \left(e^{\frac{p-2}{n-p}f}|\nabla u|^{p-2}_{ g}\right) e^{\frac{2f}{n-p}}\langle {\nabla}\varphi,{\nabla} u\rangle_{g} \,e^{-\frac{nf}{n-p}}\,d\mu_{g}\\
&=&\int_M  \langle\nabla\varphi,|\nabla u|^{p-2}_g\nabla u\rangle_g \, e^{-f}\,d\mu_g.
\end{eqnarray*}
This shows that any weak solution to $(\ref{f-p-harmonic})$ on $(M,g)$ is also a weak solution to $(\ref{p-harmonic})$ on $(M,\widetilde g)$ and vice versa.  \end{proof}

Suppose $u(x,t)$ is a positive solution of (\ref{f-p-harmonic}).  Define
\begin{eqnarray*}
w&=&-(p-1)\log u \\
h&=&|\nabla w|^{2}.
\end{eqnarray*}
We consider the symmetric $n\times n$ matrix
$$
A=\text{id}+(p-2)\frac{\nabla w \otimes   \nabla w }{h}.
$$
Note that $A$ is well defined whenever $h>0$ and  is positive definite for $p>1$. Arising from the linearized operator of the nonlinear $p$-harmonic equations, this matrix was first introduced in \cite{Moser} and was used in \cite{KN} and \cite{WZ} to study positive $p$-harmonic functions.

For the $f$-weighted $p$-Laplace equation \eqref{f-p-harmonic}, the linearized operator is
$$
{\mathcal L}(\psi)= \mathrm{div}\,\left(h^{\frac{p}{2}-1}A(\nabla \psi)\right)-h^{\frac{p}{2}-1}\langle\nabla f, A(\nabla \psi)\rangle-ph^{\frac{p}{2}-1}\langle\nabla w,\nabla \psi \rangle
$$
for smooth functions $\psi$ on $M$ and the following Bochner type formula holds.

\begin{proposition}\label{Bochner}
 Let $u$ be a positive smooth solution to \eqref{f-p-harmonic} in an open subset $U$ in $M$ and assume $h>0$ on $U$. Then
\begin{eqnarray}\label{Bochner formula}
&&\mathrm{div}\,\left(h^{\frac{p}{2}-1}A(\nabla h)\right)-h^{\frac{p}{2}-1}\langle\nabla f, A(\nabla h)\rangle-ph^{\frac{p}{2}-1}
\langle\nabla w,\nabla h\rangle\\
&=&\left(\frac{p}{2}-1\right)|\nabla h|^{2}
h^{\frac{p}{2}-2}+2h^{\frac{p}{2}-1} \left( | \nabla\nabla w |^{2} +
Ric(\nabla w, \nabla w)+ \nabla\nabla f (\nabla w,\nabla w) \right). \nonumber
\end{eqnarray}
\end{proposition}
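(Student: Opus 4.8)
The plan is to read \eqref{Bochner formula} as the weighted $p$-Bochner identity for $w=-(p-1)\log u$, obtained by linearizing the equation that $w$ satisfies and then assembling $\mathcal L(h)$ through a Leibniz-type argument, exactly as in the classical Bochner method but with the weighted divergence $\mathrm{div}_f\,X:=\mathrm{div}\,X-\langle\nabla f,X\rangle$ in place of $\mathrm{div}$. First I would record the equation for $w$. Substituting $u=e^{-w/(p-1)}$ into \eqref{f-p-harmonic}, so that $\nabla u=-\tfrac{u}{p-1}\nabla w$ and $|\nabla u|^{p-2}\nabla u=-\tfrac{e^{-w}}{(p-1)^{p-1}}h^{\frac p2-1}\nabla w$, and cancelling the common factor $e^{-w}/(p-1)^{p-1}$, gives
\[
\mathcal Q[w]:=\mathrm{div}\,\big(h^{\frac p2-1}\nabla w\big)-h^{\frac p2-1}\langle\nabla f,\nabla w\rangle-h^{\frac p2}=0 .
\]
Using $\nabla_i h=2w_jw_{ji}$ together with the definition of $A$ one verifies $\mathrm{div}\,(h^{\frac p2-1}\nabla w)=h^{\frac p2-1}A_{ij}w_{ij}$, so on $U$ (where $h>0$) this is equivalent to $A_{ij}w_{ij}=h+\langle\nabla f,\nabla w\rangle$. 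Since $\partial\big(h^{\frac p2-1}w_i\big)/\partial w_j=h^{\frac p2-1}A_{ij}$, the operator $\mathcal L$ is precisely the linearization of the weighted flux, which is what makes $\mathcal L(h)$ the natural object to compute.

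The organizing device is a product rule for the linear operator $\mathcal L(\psi)=\mathrm{div}\,(h^{\frac p2-1}A(\nabla\psi))-h^{\frac p2-1}\langle\nabla f,A(\nabla\psi)\rangle-ph^{\frac p2-1}\langle\nabla w,\nabla\psi\rangle$. Working at a fixed point in a local orthonormal frame that is parallel there (so that frame derivatives drop out), write $w_k=\langle\nabla w,e_k\rangle$ and $h=\sum_k w_k^2$. For any operator of the form $\mathrm{div}\,(B\nabla\cdot)+\langle X,\nabla\cdot\rangle$ one has $\mathcal L(\phi^2)=2\phi\,\mathcal L(\phi)+2\langle B\nabla\phi,\nabla\phi\rangle$, and applying this with $B=h^{\frac p2-1}A$ yields
\[
\mathcal L(h)=\sum_{k}\Big(2w_k\,\mathcal L(w_k)+2h^{\frac p2-1}\langle A\nabla w_k,\nabla w_k\rangle\Big).
\]
This splits the computation into a ``linearized equation'' part $\sum_k 2w_k\mathcal L(w_k)$ and a purely algebraic ``second fundamental'' part.

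For the first part I would differentiate $\mathcal Q[w]=0$ in each direction $e_k$ and commute the resulting third covariant derivatives via the Ricci identity; the curvature produced by commutation contracts against $\nabla w$ to give $\mathrm{Ric}(\nabla w,\nabla w)$, while differentiating the term $h^{\frac p2-1}\langle\nabla f,\nabla w\rangle$ produces the Hessian $\nabla\nabla f(\nabla w,\nabla w)$, all the remaining terms reorganizing into $\mathcal L(w_k)$. The upshot (with the paper's curvature convention fixing the sign) is $\sum_k 2w_k\,\mathcal L(w_k)=2h^{\frac p2-1}\big(\mathrm{Ric}(\nabla w,\nabla w)+\nabla\nabla f(\nabla w,\nabla w)\big)$, i.e.\ exactly the Bakry--\'Emery curvature attached to the measure $e^{-f}d\mu_g$. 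For the algebraic part, inserting $A=\mathrm{id}+(p-2)\tfrac{\nabla w\otimes\nabla w}{h}$ and using $w_jw_{ji}=\tfrac12\nabla_i h$ gives $\sum_k\langle A\nabla w_k,\nabla w_k\rangle=|\nabla\nabla w|^2+\tfrac{p-2}{4h}|\nabla h|^2$, so that $2h^{\frac p2-1}\sum_k\langle A\nabla w_k,\nabla w_k\rangle=2h^{\frac p2-1}|\nabla\nabla w|^2+(\tfrac p2-1)h^{\frac p2-2}|\nabla h|^2$. Adding the two parts reproduces \eqref{Bochner formula}.

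I expect the main obstacle to be the first part, namely carrying out the linearization cleanly: one must differentiate the genuinely nonlinear operator $\mathcal Q$, correctly commute the third-order terms so that precisely $\mathrm{Ric}(\nabla w,e_k)$ is extracted, and check that every other term from the differentiation reassembles into $\mathcal L(w_k)$ with the $A$-structure, the $-h^{\frac p2-1}\langle\nabla f,A(\cdot)\rangle$ correction, and the $-ph^{\frac p2-1}\langle\nabla w,\cdot\rangle$ correction appearing in the right places. The bookkeeping of the cross terms weighted by $(p-2)w_iw_j/h$ is delicate, and one must also justify the frame computation in the second paragraph by working at a point in a parallel frame. As a consistency check I would specialize to $p=2$, where $A=\mathrm{id}$ and the $(\tfrac p2-1)$ remainder drops, recovering the classical weighted Bochner formula $\Delta h-\langle\nabla f,\nabla h\rangle-2\langle\nabla w,\nabla h\rangle=2\big(|\nabla\nabla w|^2+\mathrm{Ric}(\nabla w,\nabla w)+\nabla\nabla f(\nabla w,\nabla w)\big)$ directly from $\tfrac12\Delta h=|\nabla\nabla w|^2+\langle\nabla w,\nabla\Delta w\rangle+\mathrm{Ric}(\nabla w,\nabla w)$ and $\Delta w=h+\langle\nabla f,\nabla w\rangle$.
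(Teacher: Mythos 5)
Your proposal is correct, and it reaches \eqref{Bochner formula} by a genuinely different organization than the paper's. The paper proceeds by brute force: it expands $\mathrm{div}(h^{\frac p2-1}A(\nabla h))$ term by term, invokes the classical Bochner identity $\Delta h=2|\nabla\nabla w|^2+2\,Ric(\nabla w,\nabla w)+2\langle\nabla\Delta w,\nabla w\rangle$, takes the gradient of the equation \eqref{eqn:h} paired with $\nabla w$, and then recombines four displayed identities. You instead exploit the quadratic structure $h=\sum_k w_k^2$ together with the product rule $\mathcal L(\phi^2)=2\phi\,\mathcal L(\phi)+2\langle B\nabla\phi,\nabla\phi\rangle$ for $B=h^{\frac p2-1}A$, splitting $\mathcal L(h)$ into a carr\'e-du-champ part (which immediately yields $2h^{\frac p2-1}|\nabla\nabla w|^2+(\tfrac p2-1)h^{\frac p2-2}|\nabla h|^2$, as I have checked) and a linearization part $\sum_k 2w_k\mathcal L(w_k)$. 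The step you flag as delicate in fact closes cleanly: differentiating $\mathcal Q[w]=0$ in the direction $e_k$, one finds $\nabla_k\bigl(h^{\frac p2-1}\nabla w\bigr)=h^{\frac p2-1}A(\nabla w_k)$ \emph{exactly} (this is the structural reason the linearized operator carries the matrix $A$), so that after the Ricci commutation and the differentiation of $\langle\nabla f,\nabla w\rangle$ one gets $\mathcal L(w_k)=h^{\frac p2-1}(R_{kj}+f_{kj})w_j$ and hence your claimed identity $\sum_k 2w_k\mathcal L(w_k)=2h^{\frac p2-1}\bigl(Ric+\nabla\nabla f\bigr)(\nabla w,\nabla w)$; note that summing $w_k\nabla_k(\cdot)$ is the same operation as the paper's pairing of the differentiated equation with $\nabla w$, so the two proofs share this core step. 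What your route buys is conceptual clarity (the formula appears as a Bakry--\'Emery--type identity $\mathcal L(h)=2\Gamma_2$, with no need for the classical Bochner formula as a black box) at the cost of the frame-dependence of the decomposition $h=\sum_k w_k^2$: as you note, one must work at the center of a parallel frame, and for the terms involving $\nabla\nabla e_k$ one should observe that $\sum_{k,l}w_kw_l\langle e_l,\nabla_i\nabla_j e_k\rangle=0$ by the antisymmetry obtained from differentiating $\langle e_k,e_l\rangle=\delta_{kl}$ twice; the paper's computation is heavier but entirely frame-free.
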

\proof Using \eqref{f-p-harmonic}, we first observe
\begin{eqnarray}\label{eqn:w}
\ \ \ \ \ \ \ \ \text{div}\,(|\nabla w|^{p-2}\nabla w)-|\nabla w|^{p}
&=&-(p-1)^{p-1}\text{div}\, \left(\frac{|\nabla u|^{p-2}\nabla u}{u^{p-1}} \right) -(p-1)^p\frac{|\nabla u|^p}{u^p} \\
&=&-(p-1)^{p-1}\frac{|\nabla u|^{p-2}\langle\nabla f,\nabla u\rangle}{u^{p-1}} \nonumber\\
&=&|\nabla w|^{p-2}\langle \nabla f,\nabla w\rangle. \nonumber
\end{eqnarray}
Then we calculate directly
\begin{eqnarray*}
\text{div}\,\left(h^{\frac{p}{2}-1}A(\nabla h)\right)&=&\left(\frac{p}{2}-1\right)h^{\frac{p}{2}-2}|\nabla h|^{2}+h^{\frac{p}{2}-1}\Delta h\\
& &+\left(\frac{p}{2}-2\right)\left(p-2\right)h^{\frac{p}{2}-3}\langle\nabla w,\nabla h\rangle^{2}+\left(p-2\right)h^{\frac{p}{2}-2}\langle\nabla w,\nabla h\rangle\Delta w\\
& &+\left(p-2\right)h^{\frac{p}{2}-2}\langle\nabla\langle\nabla
w,\nabla h\rangle,\nabla w\rangle.
\end{eqnarray*}
Using the standard Bochner type formula for $h=|\nabla w|^2$:
$$
\Delta h=2|\nabla\nabla w|^{2}+2Ric\left(\nabla w, \nabla
w\right)+2\langle \nabla \Delta w,\nabla w \rangle
$$
we have
\begin{eqnarray}\label{1}
&& \text{div}\,\left(h^{\frac{p}{2}-1}A(\nabla
h)\right)=\left(\frac{p}{2}-1\right)h^{\frac{p}{2}-2}|\nabla
h|^{2} \\
&&+2h^{\frac{p}{2}-1}\left(|\nabla\nabla w|^{2}+Ric\left(\nabla w,
\nabla w\right)
+\langle\nabla\Delta w,\nabla w\rangle\right)  \nonumber\\
&& +\left(\frac{p}{2}-2\right)(p-2)h^{\frac{p}{2}-3}\langle\nabla w,\nabla h\rangle^{2}+(p-2)h^{\frac{p}{2}-2}
\langle\nabla w,\nabla h\rangle\Delta w \nonumber \\
& &+(p-2)h^{\frac{p}{2}-2}\langle\nabla\langle\nabla w,\nabla h\rangle,\nabla w\rangle. \nonumber
\end{eqnarray}
Rewrite \eqref{eqn:w} by using $h=|\nabla w|^2$  as
\begin{equation}\label{eqn:h}
h^{\frac{p}{2}-1}\Delta w + \left(\frac{p}{2}-1\right) h^{\frac{p}{2}-2}\langle \nabla h, \nabla w \rangle
-h^{\frac{p}{2}} = h^{\frac{p}{2}-1}\langle\nabla f,\nabla w\rangle.
\end{equation}
Taking the gradient of both sides of (\ref{eqn:h}) and then taking the product with $\nabla w$, we are led to

\begin{eqnarray}\label{2}
& &\left(\frac{p}{2}-1\right)\left(\frac{p}{2}-2\right)h^{\frac{p}{2}-3}\langle\nabla w,\nabla h\rangle^{2}
+\left(\frac{p}{2}-1\right)h^{\frac{p}{2}-2}\langle\nabla\langle\nabla w,\nabla h\rangle,\nabla w\rangle\\
&&+h^{\frac{p}{2}-1}\langle\nabla\Delta w,\nabla w\rangle
+\left(\frac{p}{2}-1\right)h^{\frac{p}{2}-2}\langle\nabla h,\nabla w\rangle\Delta w
-\frac{p}{2}h^{\frac{p}{2}-1}\langle\nabla h,\nabla w\rangle \nonumber \\
&=&\left(\frac{p}{2}-1\right)h^{\frac{p}{2}-2}\langle\nabla f,\nabla
w\rangle\langle\nabla h, \nabla w\rangle
+h^{\frac{p}{2}-1}\langle\nabla\langle\nabla f,\nabla w\rangle,\nabla w\rangle. \nonumber
\end{eqnarray}
Adding \eqref{1} and twice of \eqref{2} together and then simplifying, we have
\begin{eqnarray}\label{eqn:div}
&&\text{div}\, \left(h^{\frac{p}{2}-1}A(\nabla h)\right) -p h^{\frac{p}{2}-1}\langle\nabla h,\nabla w\rangle \\
&=& \left(\frac{p}{2}-1\right)h^{\frac{p}{2}-2}|\nabla h|^2+2h^{\frac{p}{2}-1}|\nabla\nabla w|^2 +2 h^{\frac{p}{2}-1}Ric\langle\nabla w,\nabla w\rangle \nonumber \\
&&+(p-2)h^{\frac{p}{2}-2}\langle\nabla f,\nabla w\rangle\langle\nabla h,\nabla w\rangle+2h^{\frac{p}{2}-1}\langle\nabla\langle\nabla f,\nabla w\rangle,\nabla w\rangle. \nonumber
\end{eqnarray}
We also have
\begin{eqnarray}\label{3}
\ \ \ \ \ \ 2h^{\frac{p}{2}-1}\langle\nabla\langle\nabla f,\nabla w\rangle,\nabla w\rangle
&=&  2h^{\frac{p}{2}-1}\left[(\nabla w)(\nabla\nabla f)(\nabla w)'   + (\nabla f )(\nabla\nabla w)( \nabla w)'\right]  \\
&=&2h^{\frac{p}{2}-1}(\nabla w)(\nabla\nabla f)(\nabla w)'+h^{\frac{p}{2}-1}\langle\nabla f,\nabla|\nabla w|^2\rangle
\nonumber \\
&=&2h^{\frac{p}{2}-1}(\nabla w)(\nabla\nabla f)(\nabla w)'+h^{\frac{p}{2}-1}\langle\nabla f,\nabla h\rangle
\nonumber
\end{eqnarray}
where the gradient of a function is a row vector and the prime denotes its transpose. Moreover,
\begin{eqnarray}\label{4}
\ \ h^{\frac{p}{2}-1}\langle\nabla f,A(\nabla h)\rangle &=&
h^{\frac{p}{2}-1}\langle\nabla f,\nabla h\rangle +(p-2)h^{\frac{p}{2}-2}\langle\nabla f, (\nabla w\otimes\nabla w)\nabla h\rangle\\
&=&h^{\frac{p}{2}-1}\langle\nabla f,\nabla h\rangle +(p-2)h^{\frac{p}{2}-2}\langle\nabla f,\nabla w\rangle\langle\nabla h,\nabla w\rangle. \nonumber
\end{eqnarray}
Now, $\eqref{eqn:div} - \eqref{4} + \eqref{3}$ yields the desired result.  \endproof

\subsection{A maximum principle}

When the triple $(M,g,f)$ is either shrinking or steady, Proposition \ref{Bochner} can be used to prove the following maximum principle.
\begin{proposition}\label{max}
Let $u$ be a positive smooth solution to \eqref{f-p-harmonic} in a bounded connected open subset $U$ in $M$ with smooth boundary $\partial U$, $p> 1$.  Suppose $(M,g,f)$ is a shrinking or steady gradient Ricci soliton. Then $\frac{|\nabla u|}{u}$ attains its maximum on  $\partial U$.
\end{proposition}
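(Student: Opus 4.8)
The plan is to reduce the assertion to a weak maximum principle for $h=|\nabla w|^{2}$, where $w=-(p-1)\log u$ as in Proposition \ref{Bochner}. Since $\frac{|\nabla u|}{u}=\frac{|\nabla w|}{p-1}=\frac{\sqrt{h}}{p-1}$, the function $\frac{|\nabla u|}{u}$ attains its maximum over $\overline{U}$ exactly where $h$ does, so it is enough to prove that $h$ attains its maximum on $\partial U$. The mechanism is the Bochner identity of Proposition \ref{Bochner}, which computes $\mathcal L(h)$, where $\mathcal L$ is the linearized operator
\[
\mathcal L(\psi)=\mathrm{div}\left(h^{\frac p2-1}A(\nabla\psi)\right)-h^{\frac p2-1}\langle\nabla f,A(\nabla\psi)\rangle-ph^{\frac p2-1}\langle\nabla w,\nabla\psi\rangle .
\]
On the open set $\{h>0\}$ the matrix $A$ is positive definite for $p>1$, so there $\mathcal L$ is a second-order linear elliptic operator with no zeroth-order term.

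First I would insert the soliton equation into the right-hand side of Proposition \ref{Bochner}. Writing the defining identity as $Ric+\nabla\nabla f=-\varepsilon g$ with $\varepsilon=-\tfrac12$ for a shrinking and $\varepsilon=0$ for a steady soliton, the curvature contribution becomes
\[
Ric(\nabla w,\nabla w)+\nabla\nabla f(\nabla w,\nabla w)=-\varepsilon\,|\nabla w|^{2}=-\varepsilon\,h\ \ge\ 0
\]
in both cases, so this term carries a favorable sign and may be dropped. Next I would control the term $\left(\tfrac p2-1\right)h^{\frac p2-2}|\nabla h|^{2}$, which is negative precisely when $1<p<2$, by the Kato-type inequality $|\nabla\nabla w|^{2}\ge\frac{|\nabla h|^{2}}{4h}$; this holds because $\nabla h=2(\nabla\nabla w)(\nabla w)$ and the Hilbert--Schmidt norm dominates the operator norm. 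Feeding both facts into Proposition \ref{Bochner} gives
\[
\mathcal L(h)\ \ge\ \left(\tfrac p2-1\right)h^{\frac p2-2}|\nabla h|^{2}+\tfrac12\,h^{\frac p2-2}|\nabla h|^{2}=\tfrac{p-1}{2}\,h^{\frac p2-2}|\nabla h|^{2}\ \ge\ 0 ,
\]
since $p>1$. Thus $h$ is an $\mathcal L$-subsolution on $\{h>0\}$.

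Finally I would run the maximum principle. If $m:=\max_{\overline U}h=0$ then $u$ is constant and the claim is trivial, so assume $m>0$; if this maximum is attained only on $\partial U$ we are done, so suppose it is attained at an interior point $x_{0}$ with $h(x_{0})=m>0$. Let $\Omega$ be the connected component of $\{h>0\}\cap U$ containing $x_{0}$. On $\Omega$ the operator $\mathcal L$ is elliptic and $\mathcal L(h)\ge0$, so the strong maximum principle forces $h\equiv m$ throughout $\Omega$. Every boundary point of $\Omega$ lying in $U$ satisfies $h=0$, so continuity would produce $h=m>0$ at a point where $h=0$ unless $\partial\Omega\cap U=\emptyset$; in the latter case $\Omega$ is simultaneously open and closed in the connected set $U$, whence $\Omega=U$ and $h\equiv m$, so $h=m$ on $\partial U$ as well. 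Either way the maximum is attained on $\partial U$. The principal obstacle is precisely the degeneration of $\mathcal L$ along the critical set $\{h=0\}$, where $A$ and the Bochner identity are undefined; the component-by-component argument above, combined with the strong maximum principle, is what bypasses it, and one must take care that the decisive coefficient $\tfrac{p-1}{2}$ stays positive across the whole range $p>1$, including $1<p<2$.
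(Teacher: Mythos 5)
Your proposal is correct and follows essentially the same route as the paper: the Bochner formula of Proposition \ref{Bochner}, the soliton equation to give the term $Ric+\nabla\nabla f$ a favorable sign, Kato's inequality to absorb the $\left(\tfrac p2-1\right)h^{\frac p2-2}|\nabla h|^2$ term when $1<p<2$, and the strong maximum principle applied where $h>0$. The only (cosmetic) difference is the endgame: you propagate constancy through the connected component of $\{h>0\}$ containing the interior maximum, while the paper argues by contradiction at a boundary point of the level set $\{h=\max h\}$; both are valid ways of handling the degeneracy of $\mathcal L$ on $\{h=0\}$.
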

\proof Let $h=(p-1)^2\frac{|\nabla u|^2}{u^2}$. Assume $\max_{\overline{U}}h >\max_{\partial U}h$. Then, there exists $x_0 \in U$ such that $h(x_0)=\max_{\overline{U}}h >0$. Since $u\in C^{1,\alpha}$ and $u>0$, $h$ is continuous.  Let
$$
V=\{ x\in U: h(x) = h(x_0)\}.
$$
 By the continuity of $h$, $V$ is a closed subset of $U$ and $V$ does not intersect $\partial U$. There exists a point $x_1\in V$ such that the geodesic ball $B_{x_1}(r,g)\subset U$ is not contained in $V$ for any $0<r <r_0$ for some $r_0$, i.e. $x_1$ is a boundary point of $V$. By the continuity of $h$ again,
 there is geodesic ball $B_{x_1}(r_1,g)$ in $U$ on which $h$ is positive. Observe
\begin{eqnarray*}
{\mathrm{RHS \ of } \ \eqref{Bochner formula}}&=& \frac{p-2}{2}|\nabla h|^{2}
h^{\frac{p}{2}-2}+2h^{\frac{p}{2}-1}|\nabla\nabla w|^{2}
+2h^{\frac{p}{2}-1}\left(Ric+\nabla\nabla f\right)\left(\nabla w, \nabla w\right)  \\
&\geq&  2h^{\frac{p}{2}-1}\left(Ric+\nabla\nabla f\right)\left(\nabla w, \nabla w\right) \\
&=&\begin{cases}
2 h^{\frac{p}{2}-1}|\nabla w|^{2}\geq 0, &\quad \mbox{if $(M,g,f)$ is a shrinking soliton}\\
 0, &\quad \mbox{ if $(M,g,f)$ is a steady soliton}
\end{cases}
\end{eqnarray*}
where for the first inequality, we argue as
\begin{eqnarray*}
4h |\nabla\nabla w|^2+(p-2)|\nabla h|^2&\geq&4|\nabla w|^2 |\nabla\nabla w|^2 - |\nabla |\nabla w|^2|^2\\
&=&4|\nabla w|^2 \left(|\nabla\nabla w|^2 - |\nabla |\nabla w||^2\right)\\
&\geq& 0
\end{eqnarray*}
by Ito's inequality and $p\geq 1$.
Then it follows that the linear differential operator ${\mathcal L}$  satisfies
$
{\mathcal L}(h)\geq 0
$
on $U$. Next, since $A$ is positive definite and symmetric on $B_{x_1}(r_1,g)$, so is $h^{\frac{p}{2}-1}A$; therefore, ${\mathcal L}$ is uniformly elliptic on $B_{x_1}(r_1,g)$.
By Hopf's strong maximum principle (cf. Theorem 3.5 in \cite{GT}), $h$ must be a constant on $B_{x_1}(r_1,g)$ since it attains its maximum at the interior point $x_1$. However, this contradicts the maximality of $V$ as $B_{x_1}(r_1,g)$ contains points not in $V$.   \endproof

\subsection{Gradient estimates}

Let us first recall a gradient estimate in \cite{WZ}:

\begin{theorem}\label{WZ} (Wang-Zhang) Let $(M^{n} , g)$ be a complete Riemannian manifold with  $Ric\geq -(n-1)\kappa$ for some positive constant $\kappa$. Assume that $v$ is a positive
$p$-harmonic function on the geodesic ball $B_{x_0}(R,g)\subset  M$. Then
$$
\frac{|\nabla v|}{v}\leq C(p,n)\left(\frac{1}{R}+\sqrt{\kappa}\right)
$$
on $B_{x_0}(\frac{R}{2},g)$ for some constant $C(p,n)$.
\end{theorem}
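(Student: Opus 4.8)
The plan is to establish this as a Yau-type gradient estimate, localizing the Bochner formula of Proposition \ref{Bochner} specialized to $f\equiv 0$. Writing $w=-(p-1)\log v$ and $h=|\nabla w|^2=(p-1)^2|\nabla v|^2/v^2$, it suffices to bound $h$ on $B_{x_0}(R/2,g)$, since $|\nabla v|/v=\sqrt h/(p-1)$. On the open set $\{h>0\}$ the matrix $A=\mathrm{id}+(p-2)\,\nabla w\otimes\nabla w/h$ is positive definite and the operator $\mathcal L(\psi)=\mathrm{div}(h^{p/2-1}A(\nabla\psi))-p\,h^{p/2-1}\langle\nabla w,\nabla\psi\rangle$ is locally uniformly elliptic. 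Setting $f\equiv 0$ in \eqref{Bochner formula} and using $Ric\geq-(n-1)\kappa$ gives
\begin{equation*}
\mathcal L(h)\ \geq\ \Big(\tfrac p2-1\Big)h^{\frac p2-2}|\nabla h|^2+2h^{\frac p2-1}|\nabla\nabla w|^2-2(n-1)\kappa\,h^{\frac p2}.
\end{equation*}

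Next I would extract a usable lower bound for the Hessian term. From \eqref{eqn:h} with $f\equiv 0$ one has $\Delta w=h-(\tfrac p2-1)h^{-1}\langle\nabla h,\nabla w\rangle$, and combining this identity with a refined Kato inequality for $p$-harmonic functions (bounding $|\nabla\nabla w|^2$ below by a multiple of $(\Delta w)^2$ together with a $|\nabla|\nabla w||^2$-term, as in \cite{WZ}) produces an inequality of the schematic form
\begin{equation*}
\mathcal L(h)\ \geq\ \epsilon_0\, h^{\frac p2+1}-C_1 h^{\frac p2-2}|\nabla h|^2-C_2\,h^{\frac p2-2}\langle\nabla h,\nabla w\rangle^2-2(n-1)\kappa\,h^{\frac p2}
\end{equation*}
for constants $\epsilon_0>0$ and $C_1,C_2$ depending only on $p,n$. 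The essential gain is the coercive term $\epsilon_0 h^{p/2+1}$, which will dominate after localization.

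I would then run Yau's cut-off argument. Choose $\phi=\eta(r/R)$ with $r(x)=d_g(x,x_0)$ and $\eta$ a fixed smooth function equal to $1$ on $[0,\tfrac12]$, vanishing outside $[0,1]$, and satisfying $|\eta'|^2/\eta\leq C$ and $|\eta''|\leq C$; the Laplacian comparison theorem under $Ric\geq-(n-1)\kappa$ then yields $|\nabla\phi|^2\leq C/R^2$ and $\Delta\phi\geq-C(R^{-2}+\sqrt\kappa\,R^{-1})$. Consider $G=\phi^2 h$ and let $x_1$ be an interior maximum of $G$ in $B_{x_0}(R,g)$; if the maximum is attained on $\{h=0\}$ then $G\equiv 0$ and there is nothing to prove, so assume $h(x_1)>0$. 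At $x_1$ one has $\nabla G=0$, hence $\nabla h=-2(h/\phi)\nabla\phi$, and $\mathcal L(G)(x_1)\leq 0$. Substituting this expression for $\nabla h$ together with the bounds on $\phi$ into the coercive inequality converts it into a quadratic inequality for $\phi^2 h$ at $x_1$ of the form $\epsilon_0(\phi^2 h)^2\leq C(R^{-2}+\kappa)(\phi^2 h)$, so that $(\phi^2 h)(x_1)\leq C(p,n)(R^{-2}+\kappa)$. Since $\phi\equiv 1$ on $B_{x_0}(R/2,g)$, this gives $h\leq C(p,n)(R^{-2}+\kappa)$ there, and taking square roots yields the stated bound on $|\nabla v|/v$.

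The main obstacle is the degeneracy of the $p$-Laplacian at critical points of $v$: the matrix $A$ and the operator $\mathcal L$ are defined and elliptic only where $h>0$, and $v$ is in general merely $C^{1,\alpha}$ across $\{\nabla v=0\}$, so the Bochner computation and the maximum-principle step are a priori valid only on $\{h>0\}$. The remedy I would adopt is regularization: replace the equation by the non-degenerate $\mathrm{div}((|\nabla v|^2+\delta)^{(p-2)/2}\nabla v)=0$, carry out the argument for the smooth solutions $v_\delta$ with constants independent of $\delta$, and let $\delta\to 0$. A secondary technical point is pinning down the refined Kato inequality with a constant $\epsilon_0>0$ valid for the whole range $p>1$; this is the computational heart of \cite{WZ}, and is where the anisotropy of $A$ must be handled with care.
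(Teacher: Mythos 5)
The paper does not prove this statement at all: Theorem \ref{WZ} is quoted verbatim from Wang--Zhang \cite{WZ} and used as a black box in Proposition \ref{gradient estimate}. So the relevant comparison is with the proof in \cite{WZ}, and there your proposal diverges in a way that opens a genuine gap. Wang--Zhang do start from the same Bochner inequality for $h=|\nabla w|^2$ that you write down, but they do \emph{not} run a pointwise Yau-type maximum principle; they run a Moser iteration on the differential inequality, precisely because the maximum-principle route does not close under a mere Ricci lower bound. The problem is in your cut-off step. At an interior maximum of $G=\phi^2h$ the information you have is $\mathcal L(G)\leq 0$, not $\mathcal L(h)\leq(\text{controlled})$. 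Expanding $\mathcal L(\phi^2h)$ produces, among other terms, $h^{\frac p2}\,\mathrm{tr}\bigl(A\,\nabla\nabla\phi^2\bigr)=h^{\frac p2}\Bigl(\Delta\phi^2+(p-2)\,h^{-1}\nabla\nabla\phi^2(\nabla w,\nabla w)\Bigr)$. The first piece is handled by Laplacian comparison, as you say, but the second piece is the Hessian of the distance function evaluated in the single direction $\nabla w$, and a lower bound on Ricci controls only the trace of $\nabla\nabla r$, not its individual components. Controlling that term requires two-sided sectional curvature bounds (this is essentially why the maximum-principle version of this estimate, in Kotschwar--Ni \cite{KN}, carries a sectional curvature hypothesis). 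Under the stated hypothesis $Ric\geq-(n-1)\kappa$ your scheme therefore does not close. The Moser iteration of \cite{WZ} avoids this entirely: after multiplying the Bochner inequality by $\phi^2h^{\beta}$ and integrating by parts, only $|\nabla\phi|$ appears, never $\nabla\nabla\phi$, and the iteration is closed with the local Neumann--Sobolev inequality valid under a Ricci lower bound.

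Two smaller points. First, your ``refined Kato'' step is stated only schematically; in \cite{WZ} the coercive term $\epsilon_0h^{\frac p2+1}$ is extracted from $|\nabla\nabla w|^2\geq\frac{(\Delta w)^2}{n}$ combined with the equation \eqref{eqn:h}, and verifying $\epsilon_0>0$ for \emph{all} $p>1$ after absorbing the $\langle\nabla h,\nabla w\rangle^2$ cross terms is a nontrivial computation, not a footnote. Second, the regularization $\mathrm{div}\bigl((|\nabla v|^2+\delta)^{\frac{p-2}{2}}\nabla v\bigr)=0$ is the right device for the degeneracy, but you must solve the regularized Dirichlet problem on a slightly smaller ball with boundary data $v$ and track that every constant in the resulting Bochner inequality is uniform in $\delta$; the matrix $A$ is replaced by $\mathrm{id}+(p-2)\frac{\nabla v\otimes\nabla v}{|\nabla v|^2+\delta}$ and the inequality acquires $\delta$-dependent error terms that must be checked to vanish in the limit. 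Neither of these is fatal, but the cut-off issue in the previous paragraph is: as written, the argument proves the theorem only under sectional curvature bounds, which is a strictly weaker statement than the one claimed.
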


We prove the following gradient estimate for the $f$-weighted $p$-Laplacian equation.

\begin{proposition}\label{gradient estimate}
Let $(M^{n} , g, f)$ be a complete gradient Ricci soliton with
\begin{equation}\label{Ricci condition}
\left(\frac{2-p}{n-p}\right)Ric\geq -(n-1)\kappa
e^{-\frac{2f}{n-p}}g-\frac{2\varepsilon g}{n-p}-\frac{S g}{n-p}
-\left(d f\otimes   d f-|\nabla f|^{2}g\right)\frac{n-2}{(n-p)^{2}}
\end{equation}
where $S$ is the scalar curvature of $(M,g)$.
 Assume that $u$
is a positive solution of equation $(\ref{f-p-harmonic})$. Then there exists a constant $C(p,n)$ such that
$$
\frac{|\nabla u(x)|}{u(x)}\leq
C(p,n)\left(\frac{1}{R}+\sqrt{\kappa}\right) e^{-\frac{f(x)}{n-p}}
$$
for $x\in B_{x_0}(\frac{R}{2}, e^{-\frac{2f}{n-p}}g)$.
\end{proposition}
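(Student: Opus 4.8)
The plan is to reduce the estimate to the Wang--Zhang gradient estimate (Theorem \ref{WZ}) applied to the conformally changed metric $\tilde g = e^{-\frac{2f}{n-p}}g$. By Proposition \ref{equivalent}, a positive solution $u$ of the $f$-weighted equation \eqref{f-p-harmonic} on $(M,g)$ is precisely a positive $p$-harmonic function on $(M,\tilde g)$. Hence, provided $(M,\tilde g)$ satisfies the Ricci lower bound required by Theorem \ref{WZ} on the geodesic ball $B_{x_0}(R,\tilde g)$, namely $\widetilde{Ric}\geq -(n-1)\kappa\,\tilde g$, we immediately obtain
$$
\frac{|\widetilde\nabla u|_{\tilde g}}{u}\leq C(p,n)\left(\frac1R+\sqrt\kappa\right)\quad\text{on } B_{x_0}(\tfrac R2,\tilde g).
$$
Since $\tilde g^{ij}=e^{\frac{2f}{n-p}}g^{ij}$, one has $|\widetilde\nabla u|_{\tilde g}=e^{\frac{f}{n-p}}|\nabla u|_g$, and dividing by $e^{\frac{f}{n-p}}$ converts this bound into the asserted inequality carrying the weight $e^{-\frac{f}{n-p}}$. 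Thus the whole proposition rests on verifying the Ricci lower bound for $\tilde g$.

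The central computation is therefore the transformation of the Ricci tensor under $\tilde g=e^{2\varphi}g$ with $\varphi=-\frac{f}{n-p}$. Using the standard conformal transformation law (cf. \cite{CLN})
$$
\widetilde{Ric}=Ric-(n-2)\left(\nabla\nabla\varphi-d\varphi\otimes d\varphi\right)-\left(\Delta\varphi+(n-2)|\nabla\varphi|^2\right)g,
$$
I would substitute $\nabla\nabla\varphi=\frac{1}{n-p}(Ric+\varepsilon g)$ and $\Delta\varphi=\frac{1}{n-p}(S+n\varepsilon)$, which follow from the soliton identity \eqref{Ricci soliton} in the forms $\nabla\nabla f=-Ric-\varepsilon g$ and $\Delta f=-S-n\varepsilon$. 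Collecting terms, the coefficient of $Ric$ becomes $1-\frac{n-2}{n-p}=\frac{2-p}{n-p}$, while the remaining contributions assemble into the scalar-curvature, the $\varepsilon$, and the $df\otimes df-|\nabla f|^2g$ terms. Imposing $\widetilde{Ric}\geq -(n-1)\kappa\,\tilde g=-(n-1)\kappa\,e^{-\frac{2f}{n-p}}g$ and moving the non-$Ric$ contributions to the right-hand side is designed to coincide with hypothesis \eqref{Ricci condition}.

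The main obstacle is the bookkeeping in this conformal computation: one must correctly track $\nabla\nabla f$ and $\Delta f$ through the $(n-2)$-factors and the $(n-p)^{-1}$ and $(n-p)^{-2}$ scalings, and confirm that the combination $df\otimes df-|\nabla f|^2g$ emerges with the coefficient $\frac{n-2}{(n-p)^2}$ and that the $\varepsilon$ and $S$ terms appear with the correct signs. A secondary point is the applicability of Theorem \ref{WZ}: it is essentially a local estimate, so it suffices that $\widetilde{Ric}\geq -(n-1)\kappa\,\tilde g$ holds on $B_{x_0}(R,\tilde g)$ and that geodesic balls in $\tilde g$ are well defined, which holds on the relevant range of $p$. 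Once the Ricci inequality is matched with \eqref{Ricci condition}, the estimate follows at once from the substitution $|\widetilde\nabla u|_{\tilde g}=e^{\frac{f}{n-p}}|\nabla u|_g$.
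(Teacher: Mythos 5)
Your reduction is exactly the paper's: by Proposition \ref{equivalent} the function $u$ is a positive $p$-harmonic function for $\widetilde g=e^{-\frac{2f}{n-p}}g$, one checks $\widetilde{Ric}\geq-(n-1)\kappa\,\widetilde g$, applies Theorem \ref{WZ} on $B_{x_0}(R,\widetilde g)$, and converts back via $|\widetilde\nabla u|_{\widetilde g}=e^{\frac{f}{n-p}}|\nabla u|_g$. All of that matches the paper and is fine.

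The problem sits precisely in the step you deferred as ``bookkeeping.'' Carrying out the substitution in the standard transformation law you quote (which is the correct one; test it on $\widetilde g=(x^n)^{-2}\delta$ on the upper half-space, which must return $\widetilde{Ric}=-(n-1)\widetilde g$), with $\varphi=-\frac{f}{n-p}$, $\nabla\nabla\varphi=\frac{1}{n-p}(Ric+\varepsilon g)$ and $\Delta\varphi=\frac{1}{n-p}(S+n\varepsilon)$, yields
$$
\widetilde{Ric}=\frac{2-p}{n-p}\,Ric-\frac{(2n-2)\varepsilon}{n-p}\,g-\frac{S}{n-p}\,g+\frac{n-2}{(n-p)^{2}}\left(df\otimes df-|\nabla f|^{2}g\right),
$$
so the requirement $\widetilde{Ric}\geq-(n-1)\kappa\,\widetilde g$ is \emph{not} hypothesis \eqref{Ricci condition}: the $S$-term and the $\varepsilon$-term come out with the opposite sign (and, for $\varepsilon$, a different coefficient) from those in \eqref{Ricci condition}. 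The paper does land exactly on \eqref{Ricci condition}, but only because the transformation law it invokes carries the term $+\Delta\varphi\, g$ where the standard formula (and yours) has $-\Delta\varphi\, g$; the two versions cannot both be right, and the half-space test singles out yours. Hence your assertion that the collected terms ``coincide with hypothesis \eqref{Ricci condition}'' does not hold verbatim for the formula you wrote: you must either finish the computation and restate the hypothesis with the corrected signs, or justify the variant of the conformal formula that produces \eqref{Ricci condition}. (In the application of Theorem \ref{cigar}, where $n=2$ and $\varepsilon=0$, the discrepancy reduces to the sign of $\frac{S}{n-p}g$ and the corrected condition is actually the easier one to verify, but as written your verification of \eqref{Ricci condition} is incomplete.)
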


\begin{proof}  For a smooth function $f$, let  $\nabla f$ be  the
gradient, $\Delta f$ the Laplacian and $\nabla\nabla f$ the Hessian
with respect to $g$. For the conformal change of metrics $\widetilde g=e^{-\frac{2f}{n-p}} g$, the Ricci tensors of $\widetilde g$ and $g$ are related by (see [3], page 59):
\begin{equation}\label{Ricci change}
\widetilde{Ric}=Ric-(n-2)\left(-\frac{\nabla \nabla f}{n-p}-\frac{df\otimes df}{(n-p)^{2}}\right) + \left(-\frac{\Delta f}{n-p}
-\frac{n-2}{(n-p)^{2}}|\nabla f|^{2}\right)g.
\end{equation}

From the gradient Ricci soliton equation (\ref{Ricci soliton}), the scalar curvature
$S$ of $M$ satisfying the following two equations (see [4]):
\begin{equation}\label{2nd-order}
S+\Delta f -n\varepsilon=0,\
\end{equation}
\begin{equation}\label{1st-order}
S+|\nabla f|^{2}+\varepsilon f=0.
\end{equation}

Putting (\ref{Ricci soliton}) and (\ref{2nd-order}) into (\ref{Ricci change}), we have
\begin{eqnarray*}
\widetilde{Ric}&=&Ric+(n-2)\left(\frac{-Ric -\varepsilon g}{n-p}+\frac{df\otimes
df}{(n-p)^{2}}\right)+\left(\frac{S+n\varepsilon}{n-p}-\frac{n-2}{(n-p)^{2}}|\nabla f|^{2}\right)g\\
&=&\frac{2-p}{n-p}Ric+\frac{2\varepsilon g}{n-p}+\frac{S g}{n-p}+\left(df\otimes  df-|\nabla
f|^{2}g\right)\frac{n-2}{(n-p)^{2}}.
\end{eqnarray*}
Therefore, the curvature assumption in Theorem \ref{gradient estimate} implies
$$
\widetilde{Ric} \geq -(n-1)\kappa.
$$
By Proposition \ref{equivalent}, we know that $u$ is also a positive solution to (\ref{p-harmonic}) for the metric $\widetilde g$, hence by Theorem \ref{WZ} we have
$$
\frac{|\nabla u|_{\widetilde g}}{u}\leq
C(p,n)\left(\frac{1}{ R}+\sqrt{\kappa}\right)
$$
on $B_{x_0}(\frac{R}{2}, \widetilde g)$. This is equivalent to
$$
\frac{|\nabla u(x)|}{u(x)}\leq C(p,n)\left(\frac{1}{ R}+\sqrt{\kappa}\right)e^{-\frac{f(x)}{n-p}}
$$
for $x\in B_{x_0}(\frac{R}{2}, \widetilde g)$.
\end{proof}

\subsection{A Liouville type theorem for $p$-Laplace equation in dimension 2} For a steady gradient Ricci soliton, the condition \eqref{Ricci condition} on the Ricci curvature in Proposition \ref{gradient estimate} cannot hold globally when $n\geq 3$ because it would imply, by taking trace, that the scalar curvature is bounded below by a positive constant but this is impossible. However, the condition \eqref{Ricci condition} is satisfied when $n=2$
for $p\geq 4$ or $1<p<2$ because
$$
Ric = \frac{1}{2} \, S g \geq \frac{1}{p-2} \, S  g
$$
since $S\geq 0$ for any steady gradient Ricci soliton \cite{BLChen} and $\kappa =0$.

Note that Hamilton's cigar soliton is the unique 2-dimensional complete noncompact steady gradient Ricci soliton. The cigar soliton is ${\mathbb R}^2$ equipped with the complete metric (cf. \cite{CLN}):
$$
g = \frac{dx^2+dy^2}{1+x^2+y^2}
$$
and the potential function
$$
f(x,y) = - \log (1+x^2+y^2).
$$
The conformally altered metric is
$$
\tilde g = e^{-2\frac{-\log(1+x^2+y^2)}{2-p}}g= (1+x^2+y^2)^{\frac{p}{2-p}}(dx^2+dy^2).
$$
In particular, $\tilde g$ is complete if $1<p<2$ and incomplete if $p>2$. However, to use the gradient estimate in proving Liouville type result, we will need $p\geq 4$. It is straightforward to compute the Gauss curvature of $\tilde g$:
\begin{eqnarray*}
\widetilde K&=& - \frac{1}{2}(1+r^2)^{\frac{p}{p-2}}\left(\p^2_{rr}+\frac{1}{r}\p_r\right) \log (1+r^2)^{-\frac{p}{p-2}} \\
&=& \frac{2p}{p-2}(1+r^2)^{\frac{p}{p-2}-2}\\
&=&\frac{2p}{p-2}(1+r^2)^{-\frac{p-4}{p-2}}
\end{eqnarray*}
which is positive and tends to 0 as $r\to\infty$ if $p>4$. When $p=4$, the incomplete metric $(1+x^2+y^2)^{-2}(dx^2+dy^2)$ has constant curvature $\widetilde K = 4$.

\begin{theorem}\label{cigar}
Let $({\mathbb R}^2, g,f)$ be Hamilton's cigar soliton. Then there does not exist any nonconstant  positive $p$-harmonic function on $({\mathbb R}^2, \tilde g)$ for $p\geq 4$.
\end{theorem}
\begin{proof} Let $u$ be a positive solution to \eqref{f-p-harmonic}. For any point $x_0\in M$,  the maximum principle (Corollary \ref{max}) asserts
$$
\frac{|\nabla u(x_0)|}{u(x_0)} \leq \max_{x\in \p B_{0}(R,g)}\frac{|\nabla u(x)|}{u(x)}=\frac{|\nabla u(x_R)|}{u(x_R)}
$$
for some $x_R\in\p B_{0}(R,g)$ where $x_0\in B_0(R,g)$ and $r(x_0,0)<R$.
From the discussion above, when $n=2$ and $p\geq 4$, the Ricci curvature condition \eqref{Ricci condition} in Proposition \ref{gradient estimate} is satisfied.
The diameter of $({\mathbb R}^2,\tilde g)$ is
$$
2R_0=2\int_0^\infty \frac{dr}{(1+r^2)^{\frac{p}{2(p-2)}}}<\infty.
$$
It is clear that  $r(x_R, 0)\to\infty$ if and only if  $\tilde r(x_R, 0) \to R_0$, where $\tilde r$ denotes the distance function for the metric $\tilde g$.
Let
$$
r_{R} = \int^\infty_{R}\frac{dr}{(1+r^2)^{\frac{p}{2(p-2)}}}.
$$
It follows from Proposition \ref{gradient estimate}, applied on the ball $B_{x_R}(r_{R},\tilde g)$, that
\begin{eqnarray*}
\frac{|\nabla u(x_R)|}{u(x_R)} &\leq& C(n,p) \left(\frac{r_{x_R}}{2} \right)^{-1}
 e^{-\frac{2}{p-2}\log(1+|x_R|^2)}\\
 &=&2C(n,p)\left(\int^\infty_{R}\frac{dr}{(1+r^2)^{\frac{p}{2(p-2)}}}(1+R^2)^{\frac{2}{p-2}}\right)^{-1}\\
 &\leq& 2C(n,p)\left((1+R^2)^{\frac{2}{p-2}}\int^\infty_R \frac{dr}{r^{\frac{p}{p-2}}}\right)^{-1}\\
 &=& 2C(n,p)\left(\frac{p-2}{2}(1+R^2)^{\frac{2}{p-2}}R^{-\frac{2}{p-2}}\right)^{-1}.
\end{eqnarray*}
Since $p>2$, letting $R\to 0$ we conclude $|\nabla u(x_0)|=0$, hence $u$ is constant as $x_0$ is arbitrary.
\end{proof}


\end{document}